\documentstyle[12pt]{article} \vsize=28.7truecm \hsize=23truecm \columnsep=0.8truecm

 \topmargin=1.5truecm \oddsidemargin=0.35truecm
 \evensidemargin=0.5truecm \textheight=24truecm \textwidth=15truecm
 \flushbottom

 \def\vbar{\mathchoice{\vrule height2.3ptdepth-.3ptwidth.12pt\kern-
 .10pt}
    {\vrule height6.3ptdepth-.3ptwidth.11pt\kern-.11pt}
    {\vrule height5.1ptdepth-.30ptwidth.8pt\kern-.8pt}
    {\vrule height4.1ptdepth-.24ptwidth.6pt\kern-.7pt}}
\setlength{\textheight}{8.9in} \setlength{\oddsidemargin}{0.25in}
\setlength{\textwidth}{6.125in}

\def\reel{\hbox{I\hskip-2pt R}}

\def\<{\langle}
\def\>{\rangle}

\def\n{{\boldmath n}}

\def\<{\langle}
\def\>{\rangle}

\def\mathbb{\hbox{I\hskip -2pt 1}}
\def\reel{\hbox{I\hskip -2pt R}}

\def\n{{\noindent}}
\textheight =22.5 cm \textwidth =15 cm \voffset =-0.5 in \hoffset =0
in \headheight =0 cm

\newtheorem{theorem}{Theorem}[section]

\newtheorem{proposition}{Proposition}[section]

\newenvironment{proof}{\hspace*{0cm}{\bf Proof}}{\hfill $\Box$
\vspace*{0.5cm}}

\begin{document}

 \vspace{0.5cm}
\title{{\bf
Gaussian representation of a class of Riesz probability
distributions }}
\author{ A. Hassairi \footnote{Corresponding author.
 \textit{E-mail address: abdelhamid.hassairi@fss.rnu.tn}}
\\{\footnotesize{\it
Sfax University Tunisia.}}}

 \date{}
 \maketitle{Running title: \emph{Gaussian representation of a Riesz
 distribution
 }}

\n $\overline{\hspace{15cm}}$\vskip0.3cm  \n {\small {\bf Abstract}}
{\small  : The Wishart probability distribution on symmetric
matrices has been initially defined by mean of the multivariate
Gaussian distribution as an of the chi-square distribution. A more
general definition is given using results for harmonic analysis.
Recently a probability distribution on symmetric matrices called the
Riesz distribution has been defined by its Laplace transform as a
generalization of the Wishart distribution. The aim of the present
paper is to show that some Riesz probability distributions which are
not necessarily Wishart may also be presented by mean of the
Gaussian distribution using Gaussian samples with missing data.
}\\
 \n {\small {\it{ Keywords:}} Chi-square distribution,
Gaussian distribution,
Wishart probability distribution, Riesz probability distribution, Laplace transform.\\

\n AMS Classification : 60B11, 60B15, 60B20.\\
$\overline{\hspace{15cm}}$\vskip1cm

\section{Introduction}
Let $V$ be the space of real symmetric $r\times r$ matrices,
$\Omega$ be the cone of positive definite and $\overline{\Omega}$
the cone of positive semi-definite elements of $V$. We denote the
identity matrix by $e$, the trace of an element $x$ of $V$ by
$tr(x)$ and its determinant by $\Delta(x)$. We equip $V$ by the
scalar product
$$\langle\theta,x \rangle=tr(\theta x).$$ Denoting
$$L_\mu(\theta)=\int_Ve^{\langle\theta,x \rangle}\mu(dx)$$the Laplace transform of a measure $\mu$, it is well known (see
Faraut and Kor\'anyi (1994)) that there exists a positive measure
$\mu_{p}$ on $\overline{\Omega}$ such that the Laplace transform of
$\mu_{p}$ exists for $-\theta\in \Omega$ and is equal to
\begin{equation}\label{R:4}
  \displaystyle\int_{\overline{\Omega}}\exp(tr(\theta x))\mu_{p}(dx)=\Delta^{-p}(-\theta),
\end{equation}if and only if $p$ is in the so called Gindikin set
\begin{equation}\label{R:3}
 \Lambda=\left\{\frac{1}{2},1,\ldots,\frac{r-1}{2}\right\}\cup
  \left(\frac{r-1}{2},+\infty\right).
\end{equation}
 When $p>\frac{r-1}{2}$, the measure $\mu_{p}$ is absolutely continuous with
respect to the Lebesgue measure and is given by
$$\mu_{p}(dx)=\frac{1}{\Gamma_{\Omega}(p)}\Delta^{p-\frac{r+1}{2}}(x)\mathbf{1}_{\Omega}(x)dx,$$
where \\
 $$\Gamma_{\Omega}(p)=(2\pi)^{\frac{n-r}{2}}\prod_{k=1}^{r}\Gamma(p-\frac{k-1}{2}),$$
 where $n$ is the dimension of $V$.\\
And when $p=\frac{j}{2}$ with $j$ integer, $1\leq j\leq r-1$, the
measure
$\mu_{p}$ is singular concentrated on the set of elements of rank $j$ of $\overline{\Omega}$.\\
For $p\in \Lambda$ and $\sigma\in \Omega$, (\ref{R:4}) implies that
the measure $W_{p,\sigma}$ on $\overline{\Omega}$ defined by
\begin{equation}
 W_{p,\sigma}(dx)=\Delta^{-p}(\sigma)\exp(-tr(x\sigma^{-1}))\mu_{p}(dx)
\end{equation}
is a probability distribution. It is called the Wishart distribution
with shape parameter $p$ and scale parameter $\sigma$. Its Laplace
transform exists for $\theta\in \sigma^{-1}-\Omega$ and is equal to
\begin{equation}
\displaystyle\int_{\overline{\Omega}}\exp(tr(\theta
x))W_{p,\sigma}(dx)=\Delta^{-p}(e-\sigma\theta).
\end{equation}
When $p>\frac{r-1}{2}$, the Wishart distribution $W_{p,\sigma}$ is
given by
\begin{equation}
 W_{p,\sigma}(dx)=\frac{\Delta^{-p}(\sigma)}{\Gamma_{\Omega}(p)}\exp(-tr(x\sigma^{-1}))\Delta^{p-\frac{r+1}{2}}(x)\mathbf{1} _{\Omega}(x)dx
\end{equation}
Further details on the Wishart distribution on $\Omega$ may be
obtained from Casalis and Letac (1996) or from Letac and Massam
(1998). It is however important to mention that the Wishart
probability distribution has initially been defined as a
multivariate extension of the chi-square distribution. In fact,
taking $U_{1},. . . , U_{p}$ random vectors in $\reel^{r}$ with
distribution $N(0,\sigma)$, the distribution of the matrix $X
=\sum_{i=1}^{p} U_{i}^{t} U_{i}$ is a Wishart matrix.
\\
This kind of Wishart probability distributions may be defined using
the Gaussian random matrices. Let $u=\ ^{t}(u_{1}, . . . ,u_{r})$ be
a random vector with distribution $N_{r}(0,\sigma)$. Suppose that we
have a random sample of $u$ i.e., a sequence of independent and
identically distributed random variables with distribution equal to
the distribution of $u$, and consider the $r\times s$ Gaussian
matrix
$$U=\left(
  \begin{array}{cccccccccccccccccc}
    u_{1,1}  & . & . & . & u_{1,s}  \\
    .  & . & . & . & .  \\
    .  & . & . & .  & . \\
      . & . & . & . & .  \\
u_{r,1}  & . & . & . & u_{r,s}\\
  \end{array}
\right).$$ The distribution of $U$ is $N_{r,s}(0,\sigma)$ equal to
$$\frac{1}{(2\pi)^{\frac{rs}{2}}\Delta^{\frac{s}{2}}(\sigma)}
e^{-\frac{1}{2}\langle u,\sigma^{-1} u\rangle},$$ where $\langle
u,\sigma^{-1} u\rangle=tr(\sigma^{-1} u\ ^{t}u).$ \\We have
\begin{proposition}\label{3353}If $U\sim N_{r,s}(0,\sigma)$, then
the matrix $X=U^{t}U$ has the Wishart probability distribution
$W_{r}(\frac{s}{2},\frac{\sigma^{-1}}{2})$.
\end{proposition}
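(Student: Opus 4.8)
\medskip
\noindent\textbf{Proof strategy.} Rather than attempting a direct change of variables for the map $U\mapsto U\ ^{t}U$, which is not injective and would force a singular Jacobian computation, the plan is to identify the law of $X=U\ ^{t}U$ through its Laplace transform and then to invoke the uniqueness of the Laplace transform together with the characterization of the Wishart distribution by the formula $\Delta^{-p}(e-\sigma\theta)$ recalled above. First I would write $U=(u^{(1)},\ldots,u^{(s)})$ in terms of its columns and observe that the quadratic form in the density $N_{r,s}(0,\sigma)$ splits as $tr(\sigma^{-1}U\ ^{t}U)=\sum_{i=1}^{s}\ ^{t}u^{(i)}\sigma^{-1}u^{(i)}$, so that the columns $u^{(1)},\ldots,u^{(s)}$ are independent and each distributed as $N_{r}(0,\sigma)$. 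Since $X=U\ ^{t}U=\sum_{i=1}^{s}u^{(i)}\ ^{t}u^{(i)}$ and $tr(\theta\,u\ ^{t}u)=\ ^{t}u\,\theta\,u=\langle u,\theta u\rangle$, the exponent $tr(\theta X)$ decomposes as $\sum_{i=1}^{s}\langle u^{(i)},\theta u^{(i)}\rangle$ and the Laplace transform factorizes over the columns:
$$
\int\exp\big(tr(\theta\,U\ ^{t}U)\big)\,N_{r,s}(0,\sigma)(dU)=\left(\int_{\reel^{r}}\exp\big(\langle u,\theta u\rangle\big)\,N_{r}(0,\sigma)(du)\right)^{s}.
$$

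\medskip
\noindent The heart of the argument is the single Gaussian integral on the right. I would complete the square in the exponent,
$$
\langle u,\theta u\rangle-\frac12\langle u,\sigma^{-1}u\rangle=-\frac12\ ^{t}u\,(\sigma^{-1}-2\theta)\,u,
$$
which shows that the integrand is integrable exactly when $\sigma^{-1}-2\theta\in\Omega$; integrating against the normalizing constant of $N_{r}(0,\sigma)$ then gives
$$
\int_{\reel^{r}}\exp\big(\langle u,\theta u\rangle\big)\,N_{r}(0,\sigma)(du)=\frac{\Delta^{-\frac12}(\sigma^{-1}-2\theta)}{\Delta^{\frac12}(\sigma)}=\Delta^{-\frac12}(e-2\sigma\theta),
$$
where the last step uses the multiplicativity of the determinant, $\Delta(\sigma)\Delta(\sigma^{-1}-2\theta)=\Delta\big(\sigma(\sigma^{-1}-2\theta)\big)=\Delta(e-2\sigma\theta)$. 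Raising to the power $s$ yields the Laplace transform of $X$ as $\Delta^{-\frac{s}{2}}(e-2\sigma\theta)$, which is of the Wishart form $\Delta^{-p}(e-\sigma'\theta)$ with shape $p=\frac{s}{2}$ and scale $\sigma'=2\sigma$; by uniqueness of the Laplace transform, $X$ follows this Wishart law.

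\medskip
\noindent There is no deep obstacle here: the computation is a standard Gaussian integral. The only real subtlety is the bookkeeping of the factor $2$, which must be tracked consistently through the completion of the square, the determinant identity, and the parametrization used in the statement, so that the computed scale $\sigma'=2\sigma$ is correctly reconciled with the asserted $W_{r}(\frac{s}{2},\frac{\sigma^{-1}}{2})$ (the second argument being read as the inverse scale $(\sigma')^{-1}=\frac{\sigma^{-1}}{2}$). One should also check that the convergence region $\sigma^{-1}-2\theta\in\Omega$ of the Gaussian integral coincides with the domain $\theta\in(\sigma')^{-1}-\Omega$ on which the target Wishart Laplace transform recalled above is defined.
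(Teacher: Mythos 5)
Your proof is correct and follows essentially the same route as the paper: both compute the Laplace transform of $X=U\,^{t}U$ by completing the square in the Gaussian exponent and then identify the resulting expression $\Delta^{-\frac{s}{2}}(e-2\sigma\theta)$ (equivalently, $\Delta^{\frac{s}{2}}\bigl((\frac{\sigma^{-1}}{2}-\theta)^{-1}\bigr)/\Delta^{\frac{s}{2}}(2\sigma)$) as the Wishart Laplace transform with the stated parameters. The only cosmetic difference is that you factor the integral over the i.i.d.\ columns of $U$ and raise a single $r$-dimensional Gaussian integral to the power $s$, whereas the paper evaluates the full $rs$-dimensional Gaussian integral in one step.
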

\begin{proof}
The Laplace transform of $X$ estimated in
$\theta\in(\frac{\sigma^{-1}}{2}-\Omega)$ is given by
\begin{eqnarray*}
E(e^{\langle\theta,X\rangle})&=&\frac{1}{(2\pi)^{\frac{rs}{2}}\Delta^{\frac{s}{2}}(\sigma)}
\int_{\reel^{rs}}e^{\langle\theta,u
^{t}u\rangle-\frac{1}{2}\langle u,\sigma^{-1} u\rangle}du\\
&=&\frac{1}{(2\pi)^{\frac{rs}{2}}\Delta^{\frac{s}{2}}(\sigma)}
\int_{\reel^{rs}}e^{-\frac{1}{2}\langle u,(\sigma^{-1}-2\theta)u
\rangle}du\\
&=&\frac{\Delta^{\frac{s}{2}}\left((\sigma^{-1}-2\theta)^{-1}\right)}{\Delta^{\frac{s}{2}}(\sigma)}\\
&=&\frac{\Delta^{\frac{s}{2}}\left((\frac{\sigma^{-1}}{2}-\theta)^{-1}\right)}
{\Delta^{\frac{s}{2}}\left((\frac{\sigma^{-1}}{2})^{-1}\right)}.
\end{eqnarray*}
It is then the Laplace transform of a
$W_{r}(\frac{s}{2},\frac{\sigma^{-1}}{2})$ distribution.
\end{proof}\\
This distribution is absolutely continuous with respect to the
Lebesgue measure if and only if $\frac{s}{2}>\frac{r-1}{2}$, that is
if and only if the size $s$ of the sample is greater or equal to the
dimension $r$ of
the space of observations.\\
If we consider $p$ independent random matrices $U_{1},. . . , U_{p}$
with distribution $N_{r,s}(0,\sigma)$, then the distribution of the
matrix $X=\sum_{i=1}^{p} U_{i}\ ^{t} U_{i}\sim W_{r}(\frac{ps}{2},\frac{\sigma^{-1}}{2})$\\
Of course the class of Wishart distributions defined by mean of the
Gaussian vectors or the Gaussian matrices doesn't cover all the
Wishart distributions, however the fact that this random matrix is
expressed in terms of Gaussian random vectors allows to establish
many important properties of this subclass of the class of Wishart
probability distribution.\\
Using a theorem due to Gindikin which relayes on the notion of
generalized power, Hassairi and Lajmi \cite{HaLa} have introduced an
important generalization of the Wishart distribution that they have
called Riesz distribution. This distribution in its general form is
defined by its Laplace transform. We will show that some among the
Riesz distributions may be represented by the Gaussian distribution
using samples of Gaussian vectors with missing data.
\section{Riesz distributions}

\noindent For $ 1\leq i,j\leq r $, we define the matrix
$\mu_{ij}=(\gamma_{k \ell })_{1\leq k,\ell \leq r}$ such that
$\gamma_{ij}=\frac{1}{\sqrt{2}}$ and the other entries are equal to
 0. We also set $$c_i=\sqrt{2}\mu_{ii} \textrm{ for } 1\leq i\leq r,
 \textrm{ and } e_{ij}= (\mu_{ij}+\mu_{ji}), \textrm{ for }  1\leq i< j\leq
 r.$$
 With these notations, an element $x$ of $V$ may be written $$x=\displaystyle\sum_{i=1}^rx_ic_i+\sum_{i<j}x_{ij} e_{ij}.$$
In particular, for $1\leq k\leq r, $ we set
 $e_k=c_{1}+\cdot\cdot\cdot+c_{k}$.\\
Now consider the map $P_k$ from $V$ into $V$ defined by
$$x=\displaystyle\sum_{i=1}^rx_ic_i+\sum_{i<j}x_{ij} e_{ij} \longmapsto P_k(x)=\displaystyle\sum_{i=1}^kx_ic_i+\sum_{i<j\leq k}x_{ij} e_{ij}.$$
Then the determinant $\Delta^{(k)}(P_k(x))$ of the $k\times k$ bloc
$P_k(x)$ is denoted by $\Delta_k(x)$, it is the principal minor of
$x$ of order $k$. The generalized power of an element $x$ of
$\Omega$ is then defined for $s=(s_1,\cdot\cdot\cdot,s_r)\in
\reel^r$, by
$$\Delta_s(x)=\Delta_1(x)^{s_1-s_2}\Delta_2(x)^{s_2-s_3}\cdot\cdot\cdot\Delta_r(x)^{s_r}.$$
Note that $\Delta_s(x)=\Delta^p( x)$ if $s=(p,\cdot\cdot\cdot,p)$
with $p \in \reel$. It is also easy to see that
$\Delta_{s+s'}(x)=\Delta_s(x)\Delta_{s'}(x)$. In particular, if $m
\in \reel \ $ and $s+m=(s_1+m,\cdot\cdot\cdot,s_r+m)$, we have
$\Delta_{s+m}(x)=\Delta_{s}(x)\Delta^m(x)$. \\
Now, we introduce the set $\Xi$ of elements $s$ of $\reel^r$ defined
as follows:\\ For a real $u\geq 0$, we put $\varepsilon(u)=0$ if
$u=0$ and $\varepsilon(u)=1$ if $u>0$. \\For $u_1,u_2,\cdots,u_r\geq
0,$ we define $s_1=u_1$ and
$s_k=u_k+\frac{1}{2}(\varepsilon(u_1)+\cdots+\varepsilon(u_{k-1}))$,
for $2\leq k\leq r$.\\
We have the following result due to Gindikin \cite{G}, it is also
proved in the monograph by Faraut and Kor\'{a}nyi \cite{Farau}.
\begin{theorem}
There exists a positive measure $R_s$ on $V$ such that the Laplace
transform $L_{R_s}$ is defined on $-\Omega$ and is equal to
$\Delta_s(-\theta^{-1})$ if and only if $s$ is in $\Xi$.
\end{theorem}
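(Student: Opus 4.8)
The plan is to establish both implications through the Cholesky parametrization of the cone, under which the generalized power splits into a product over the diagonal directions. Writing $x=t\,{}^{t}t$ with $t$ lower triangular and positive diagonal $(t_{11},\dots,t_{rr})$, the leading principal blocks give $\Delta_{k}(x)=\prod_{i\le k}t_{ii}^{2}$, so a telescoping computation yields
$$\Delta_{s}(x)=\prod_{i=1}^{r}t_{ii}^{\,2s_{i}}.$$
This factorization is the backbone of the argument: it turns $R_{s}$ into a product of $r$ one-dimensional problems along the diagonal directions, with the off-diagonal entries carrying only a Gaussian weight, so that the positivity question decouples direction by direction.

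For the sufficiency I would first treat the regular stratum, where $u_{i}>0$, equivalently $s_{i}>(i-1)/2$, for every $i$. There the natural candidate is the absolutely continuous measure
$$R_{s}(dx)=\frac{1}{\Gamma_{\Omega}(s)}\,\Delta_{s}(x)\,\Delta^{-\frac{r+1}{2}}(x)\,\mathbf{1}_{\Omega}(x)\,dx,\qquad \Gamma_{\Omega}(s)=(2\pi)^{\frac{n-r}{2}}\prod_{i=1}^{r}\Gamma\left(s_{i}-\frac{i-1}{2}\right).$$
To compute $L_{R_{s}}(-e)$ I would pass to Cholesky coordinates: the Jacobian contributes $\prod_{i}t_{ii}^{\,r+1-i}$, which combined with $\Delta_{s}\Delta^{-(r+1)/2}$ leaves the diagonal weight $\prod_{i}t_{ii}^{\,2s_{i}-i}$; the integral then factors into $r$ gamma integrals $\int_{0}^{\infty}e^{-t^{2}}t^{2s_{i}-i}\,dt$, each convergent precisely when $s_{i}>(i-1)/2$, times Gaussian integrals over the off-diagonal entries, the constant being exactly $\Gamma_{\Omega}(s)$. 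The passage from $\theta=-e$ to an arbitrary $-\theta\in\Omega$ follows from the equivariance $\Delta_{s}(g x\,{}^{t}g)=\Delta_{s}(g\,{}^{t}g)\,\Delta_{s}(x)$ under the triangular group together with the invariance of $\Delta^{-(r+1)/2}(x)\,dx$, giving $L_{R_{s}}(\theta)=\Delta_{s}(-\theta^{-1})$. For the singular points of $\Xi$, where some $u_{i}=0$, the corresponding diagonal integral degenerates; there I would replace the absolutely continuous law of that $t_{ii}$ by a Dirac mass at $0$, which collapses the $i$-th direction and lowers the rank of $x$. Pushing the product measure forward through $x=t\,{}^{t}t$ then produces a positive measure carried by a lower-rank stratum of $\overline{\Omega}$, and the recursion $s_{k}=u_{k}+\frac{1}{2}(\varepsilon(u_{1})+\cdots+\varepsilon(u_{k-1}))$ is exactly the bookkeeping recording which earlier directions are still active, hence by how much the exponents of the surviving diagonal entries must be shifted after each collapse.

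For the necessity, suppose $s\notin\Xi$; then the greedy solution of the recursion breaks at some index $k$, forcing the only admissible shape parameter $u_{k}$ to be strictly negative. Reducing to the $k$-th surviving diagonal direction, the would-be marginal law is $t^{2u_{k}-1}\,dt$ with $2u_{k}-1<-1$, and in the one-parameter Riesz family $t_{+}^{\lambda}/\Gamma$ the only positive members are $\lambda>-1$ (absolutely continuous) and $\lambda=-1$ (the mass $\delta_{0}$), every smaller exponent yielding a genuinely signed distribution; hence $\Delta_{s}(-\theta^{-1})$ cannot be a Laplace transform of a positive measure. The hard part will be exactly this necessity together with the boundary analysis of the singular strata: one must rule out \emph{every} positive measure --- absolutely continuous, singular, or mixed --- rather than merely the natural candidate density, which requires a disintegration along the triangular group showing that each diagonal marginal inherits positivity and therefore that each $u_{k}\ge 0$. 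This is the point at which Gindikin's analysis of the generalized gamma function and the order in which the indicators $\varepsilon(u_{i})$ switch on become indispensable.
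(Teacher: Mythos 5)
The paper itself offers no proof of this statement: it is quoted as Gindikin's theorem, with the proof attributed to Gindikin (1964) and to the monograph of Faraut and Kor\'{a}nyi (1994). So your proposal can only be judged against those standard proofs, and on the regular stratum it matches them: for lower triangular $t$ one indeed has $\Delta_k(t\,{}^{t}t)=\prod_{i\leq k}t_{ii}^{2}$, hence $\Delta_s(t\,{}^{t}t)=\prod_i t_{ii}^{2s_i}$, the change of variables $x=t\,{}^{t}t$ (Jacobian $2^{r}\prod_i t_{ii}^{r+1-i}\,dt$, up to the normalization of Lebesgue measure that produces the $(2\pi)^{\frac{n-r}{2}}$ in $\Gamma_\Omega(s)$) reduces $L_{R_s}(-e)$ to the integrals $\int_0^\infty e^{-t^2}t^{2s_i-i}dt$, finite exactly when $s_i>\frac{i-1}{2}$, and triangular-group equivariance transports the identity to all of $-\Omega$. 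This is precisely how $\Gamma_\Omega(s)$ is computed in Faraut--Kor\'{a}nyi, so that part is sound.

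The rest has genuine gaps. First, your singular construction, read literally, fails: collapsing only the diagonal entry $t_{ii}$ to $\delta_0$ and keeping the other entries absolutely continuous does not produce the right Laplace transform for small parameters. Take $r=2$ and $s=(0,\frac{1}{4})$ (so $u_1=0$, $u_2=\frac{1}{4}$), a point of $\Xi$ whose target transform at $\theta=-w$ is $w_{22}^{-1/4}$. With $t_{11}=0$ and $t_{21}$ absolutely continuous one gets $x_{22}=t_{21}^{2}+t_{22}^{2}$, and matching $w_{22}^{-1/4}$ forces a diagonal weight $t_{22}^{a}dt_{22}$ with $a=-\frac{3}{2}$, which is not locally integrable; no ``shifted exponent'' can repair this. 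The correct rule is column-wise --- when $u_j=0$ the whole $j$-th column of $t$ must vanish (here $t_{21}=0$, with weight $t_{22}^{2u_2-1}$) --- and even then one must verify by a Schur-complement induction that the pushforward has transform $\Delta_s(-\theta^{-1})$; your sketch does neither. Second, and more seriously, the necessity direction --- the actual content of Gindikin's theorem --- is not proved: you reduce it to the claim that any positive measure with this Laplace transform disintegrates along the triangular group into positive one-dimensional diagonal factors, but you give no argument for that claim, and it cannot come from marginals alone. Indeed for $s=(\frac{1}{2},0)\notin\Xi$ the restriction of $\Delta_s(-\theta^{-1})$ to diagonal $\theta=-\mathrm{diag}(w_1,w_2)$ equals $w_1^{-1/2}$, the transform of the genuinely positive measure given by a one-dimensional gamma law of shape $\frac{1}{2}$ in the first diagonal entry times $\delta_0$ in the second; only the conditional (Schur-complement) structure detects $u_2=-\frac{1}{2}<0$, and extracting that structure from mere positivity of a hypothetical measure is exactly what the several-page arguments of Gindikin and of Faraut--Kor\'{a}nyi (analytic continuation of $s\mapsto R_s$ as tempered distributions, support analysis on $\overline{\Omega}$, induction on rank) accomplish. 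As written, your proof establishes ``if'' only on the open stratum and ``only if'' not at all.
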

The measures $R_s$, defined in the previous theorem in terms of
their Laplace transforms are divided into two classes according to
the position of $s$ in $\Xi$. In the first class, the measures are
absolutely continuous with respect to the Lebesgue measure on
$\Omega$. More precisely, we have (see \cite{HaLa}) that when
$s=(s_1,\cdots,s_r)$ in $\Xi$ is such that for all $i$,
$s_i>\frac{i-1}{2}$, $$R_s(dx)=\frac{1}{\Gamma _{\Omega }(s)}\Delta
_{s-\frac{r+1}{2}}(x) {\mathbf{1}}_{\Omega }(x)dx$$ where
$\Gamma_{\Omega }(s)=(2\pi
)^{\frac{n-r}{2}}\displaystyle\prod_{j=1}^r\Gamma
(s_{j}-\frac{j-1}{2})$. \\The second class corresponds to $s$ in
$\Xi \setminus \prod_{i=1}^{r}]\frac{i-1}{2},+\infty[$. In this case
the Riesz measure $R_s$ is concentrated on the boundary
$\partial\Omega$ of $\Omega$, it has a complicated form which is
explicitly described in \cite{HaLa1}.\\ For $s=(s_{1},\cdots,s_{r})$
in $\Xi$ and $\sigma$ in $\Omega$, we define the Riesz distribution
$R_r(s,\sigma )$ by
$$R_r(s,\sigma )(dx)=\frac{e^{-\langle\sigma ,x\rangle}}{\Delta _{s}(\sigma
^{-1})}R_s(dx).$$ The Laplace transform of the Riesz distribution is
defined for $\theta$ in $\sigma-\Omega$ by
\begin{equation}\label{dd}
L_{R_r(s,\sigma
)}(\theta)=\frac{\Delta_{s}((\sigma-\theta)^{-1})}{\Delta
_{s}(\sigma ^{-1})}
\end{equation}
If $s=(p,\cdots,p)$ such that $p\in
\Lambda=\{0,\frac{1}{2},\cdots,\frac{r-1}{2}\}\cup
]\frac{r-1}{2},+\infty[$, then $R_r(s,\sigma)$ is nothing but the
Wishart distribution with shape
parameter $p$ and scale parameter $\sigma$.\\
The definition of the absolutely continuous Riesz distribution on
the cone of positive definite symmetric matrices $\Omega$ relies on
the notion of generalized power. It is defined for $\sigma$ in
$\Omega$ and $s=(s_1,\cdot\cdot\cdot,s_r)\in\reel^r$ such that
$s_{i}>\frac{i-1}{2}$ by

$$
R(s,\sigma )(dx)=\frac{1}{\Gamma _{\Omega }(s)\Delta _{s}(\sigma
^{-1})}e^{-\langle\sigma ,x\rangle}\Delta _{s-\frac{r+1}{2}}(x)
{\mathbf{1}}_{\Omega }(x)dx.
$$

\section{Main result}
We now show that some of the Riesz distributions have a
representation in terms of the Gaussian distribution generalizing
what accours in the case of the Wishart distribution.
For simplicity, we take $\sigma$ equal to the identity matrix.\\
Suppose that we have $s_{r}$ independent observations of the vector
$(u_{1}, . . . ,u_{r})$ with Gaussian distribution $N_{r}(0,I_{r})$,
and that some data in the observations are missing. The sample is
then partitioned according to the number of available components in
the observation. More precisely, we suppose that there exist
integers $0<s_{1}\leq s_{2}\leq...\leq s_{r}$ such that we have

 $s_{1}$ observations on $(u_{1}, . . . ,u_{r})$ where all the components of the
 observation are available,

 $(s_{2}-s_{1})$ observations on $(u_{2}, . . . ,u_{r})$, the first component of the
 observation is missing,

$(s_{i+1}-s_{i})$ observations on $(u_{i+1}, . . . ,u_{r})$, the $i$
first components of the
 observation are missing, $2\leq i\leq r-1$.\\
 The number $s_{i}$ is in particular equal to the number of times the component $u_{i}$ appears in the data.\\
  Define

 $p_{1}=\sup\{i\geq1; s_{i}=s_{1}\ \},$

 $p_{2}=\sup\{i>p_{1}; s_{i}= s_{p_{1}+1}\ \},$

 $p_{l+1}=\sup\{i>p_{l}; s_{i}= s_{p_{l}+1} \},$

 $p_{k}=\inf\{i; s_{i}=s_{r} \}-1,$\\
 so that we have

 $s_{1}=...=s_{p_{1}}$,

 $s_{p_{1}}\neq s_{p_{1}+1}$,

 $s_{p_{1}+1}=...=s_{p_{2}}$,

  $s_{p_{2}}\neq s_{p_{2}+1}$

 $ \ \ \ .$

 $ \ \ \ .$

 $ \ \ \ .$

 $s_{p_{l}+1}=...=s_{p_{l+1}}$,

 $ \ \ \ .$

 $ \ \ \ .$

 $ \ \ \ .$

 $s_{p_{k}}\neq s_{r}$

 $s_{p_{k}+1}=...=s_{r}$.\\
We have that $p_{k+1}=r$ .\\Replacing the missing components by the
theoretical mean that is by zero, we obtain the matrix
\begin{equation}\label{3454}
U=\left(
  \begin{array}{cccccccccccccccccc}
    u_{1,1}  & . & . & u_{1,s_{p_{1}}} & 0 & . & . & . & . & . & .  & 0 \\
    .  & . & . & . & . & . & . & . & . & . & . & .  \\
    .  & . & . & .  & . & . & . & . & . & . & . & .  \\
     u_{p_{1},1}  & . & . & u_{p_{1},s_{p_{1}}}& 0 &. & . & . & . & .& . &  0\\
    u_{p_{1}+1,1}  & . & . & u_{p_{1}+1,s_{p_{1}}} & . & . &u_{p_{1}+1,s_{p_{2}}} & 0   & . & . & . &  0 \\
     . & . & . & . & . & . & . & . & . & . & . & . \\
      . & . & . & . & . & . & . & . & . & . & . & .   \\
    u_{p_{2},1}  & . & . & u_{p_{2}-1,s_{p_{1}}} & . & . &u_{p_{2},s_{p_{2}}} & 0 &  . & . & . & 0 \\
     . & . & . & . & . & . & . & . & . & . &  . & .  \\
      . & . & . & . & . & . & . & . & . & . &  . & .  \\
      . & . & . & . & . & . & . & . & . & . & . & .  \\
     u_{p_{k}+1,1}  & . & . &. & . & . &.& .&  . & . & .  & u_{p_{k}+1,s_{r}} \\
      . & . & . & . & . & . & . & . & . & . &  . & .  \\
      . & . & . & . & . & . & . & . & . & . &  .& . \\
u_{r,1}  & . & . & . & . & . & . & . & . & . & .  & u_{r,s_{r}}\\
  \end{array}
\right)
\end{equation}
This matrix $U$ consists of blocks defined recursively in the
following way :

$$U^{(1)}=\left(u_{i,j}\right)_{1\leq i \leq p_{1} \\
1\leq j \leq s_{p_{1}}}$$
\begin{equation}\label{3465}
U^{(l+1)}=\left(
  \begin{array}{cccccccccccccccccc}
   U^{(l+1)}_{1}  & 0 \\
    U^{(l+1)}_{21} & U^{(l+1)}_{0}
\end{array}
\right),\ 1\leq l \leq k
\end{equation}
with

$U^{(l+1)}_{1}=U^{(l)}$

 $U^{(l+1)}_{21}= \left(u_{i,j}\right)_{p_{l}+1\leq i \leq p_{l+1} \ , \ 1\leq j \leq s_{p_{l}}}$

  $U^{(l+1)}_{0}= \left(u_{i,j}\right)_{p_{l}+1\leq i \leq p_{l+1} \ , \ s_{p_{l}}+1\leq j \leq
  s_{p_{l+1}}}$.\\
 We have that $U^{(k+1)}=U$.

\begin{theorem}\label{3499}
 The distribution of  $X=U^{t}U$ is $R(s,\frac{I_{r}}{2})$
with $s=(\frac{s_{1}}{2},...,\frac{s_{r}}{2})$.
\end{theorem}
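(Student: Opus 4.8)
The plan is to follow the route of Proposition \ref{3353}: compute the Laplace transform of $X=U\,{}^tU$ directly and identify it with the Riesz Laplace transform (\ref{dd}), taken with scale parameter $\sigma=I_r/2$ and shape $s=(s_1/2,\ldots,s_r/2)$. Writing $c_1,\ldots,c_{s_r}$ for the columns of $U$, one has $X=\sum_j c_j\,{}^tc_j$, hence $\langle\theta,X\rangle=\sum_{j=1}^{s_r}{}^tc_j\,\theta\,c_j$. Since the entries of $U$ are independent, the columns are independent Gaussian vectors and the Laplace transform factorizes as $\E(e^{\langle\theta,X\rangle})=\prod_{j=1}^{s_r}\E(e^{{}^tc_j\theta c_j})$.

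The decisive structural feature is that, because $s_1\le\cdots\le s_r$, column $c_j$ carries its nonzero (standard Gaussian) entries exactly in the rows $i$ with $s_i\ge j$, that is in rows $m_j,m_j+1,\ldots,r$ with $m_j=\min\{i:\ s_i\ge j\}$, all other entries being identically zero. Thus $c_j$ is a standard Gaussian vector supported on the last $r-m_j+1$ coordinates, and ${}^tc_j\,\theta\,c_j$ depends only on the bottom-right block of $\theta$ indexed by $\{m_j,\ldots,r\}$. The elementary Gaussian integral then gives $\E(e^{{}^tc_j\theta c_j})=\Delta\big(B^{[m_j]}\big)^{-1/2}$, where $B=I_r-2\theta$ and $B^{[m]}$ is the bottom-right principal block of $B$ on indices $\{m,\ldots,r\}$; this is valid precisely when $B\succ 0$, i.e. for $\theta\in I_r/2-\Omega$, which is the domain of (\ref{dd}). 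Grouping the columns by the value of $m_j$ and noting that exactly $s_m-s_{m-1}$ columns satisfy $m_j=m$ (with the convention $s_0=0$), I obtain
$$\E(e^{\langle\theta,X\rangle})=\prod_{m=1}^r \Delta\big(B^{[m]}\big)^{-(s_m-s_{m-1})/2}.$$

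It remains to match this product with $\Delta_s\big((I_r/2-\theta)^{-1}\big)/\Delta_s(2I_r)$. Using $I_r/2-\theta=\frac12 B$ together with the scaling rule $\Delta_s(\lambda x)=\lambda^{|s|}\Delta_s(x)$, where $|s|=\sum_k s_k$ (which follows from $\Delta_k(\lambda x)=\lambda^k\Delta_k(x)$ and a telescoping sum), the normalizing constants cancel and the target reduces to $\Delta_s(B^{-1})$. The key algebraic input is Jacobi's identity for the minors of an inverse: for $B\succ 0$ the top-left principal minor satisfies $\Delta_k(B^{-1})=\Delta\big(B^{[k+1]}\big)/\Delta(B)$, the complementary bottom-right minor of $B$ divided by $\det B$. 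Substituting this into $\Delta_s(B^{-1})=\prod_k\Delta_k(B^{-1})^{(s_k-s_{k+1})/2}$ (with $s_{r+1}=0$) and telescoping the powers of $\det B=\Delta\big(B^{[1]}\big)$ reproduces exactly $\prod_m\Delta\big(B^{[m]}\big)^{-(s_m-s_{m-1})/2}$. The two Laplace transforms thus agree on $I_r/2-\Omega$, and uniqueness of the Laplace transform yields the claim.

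I expect the main obstacle to be this last identification. It hinges on Jacobi's minor identity and on the fact that the triangular zero-pattern of $U$, forced by the monotonicity $s_1\le\cdots\le s_r$, makes each column contribute precisely a trailing principal minor of $B$, so that the exponents telescope into the generalized power $\Delta_s$ of the inverse. The bookkeeping of how many columns share a given support, and the careful handling of the boundary conventions $s_0=0$ and $s_{r+1}=0$, is where errors are most likely to arise.
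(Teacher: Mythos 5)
Your proof is correct, and it takes a genuinely different route from the paper. The paper argues by induction on the nested blocks $U^{(l)}$ of (\ref{3465}): the base case is Proposition \ref{3353}, and the inductive step conditions on $U^{(l)}$, integrates out the Gaussian blocks $U^{(l+1)}_{21}$ and $U^{(l+1)}_{0}$, and then merges the resulting factors into a single generalized power via the Schur--complement identity $\Delta_{(\sigma,\tau,\ldots,\tau)}(z^{-1})=\Delta^{\tau}(d^{-1})\,\Delta_{\sigma}\bigl((a-{}^{t}b\,d^{-1}b)^{-1}\bigr)$ for the block matrix $z=\bigl(\begin{smallmatrix} a & {}^{t}b\\ b & d\end{smallmatrix}\bigr)$ --- an identity the paper uses implicitly in its final combination step. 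You instead factor the Laplace transform over the independent columns of $U$, observe that the monotone missing-data pattern makes column $j$ a standard Gaussian supported on the trailing rows $\{m_j,\ldots,r\}$, so each column contributes $\det\bigl(B^{[m_j]}\bigr)^{-1/2}$ with $B=I_r-2\theta$, and then identify the grouped product with $\Delta_{s}(B^{-1})$ through Jacobi's identity $\Delta_k(B^{-1})=\det\bigl(B^{[k+1]}\bigr)/\det B$ and telescoping. The two arguments rest on essentially the same determinantal fact (the paper's Schur-complement step is equivalent to Jacobi's identity), but yours is non-inductive and makes the key identity explicit, which clarifies exactly why the generalized power of the inverse appears; the paper's induction is longer but yields more along the way, namely the stronger intermediate statement that every partial matrix $U^{(l)}\,{}^{t}U^{(l)}$ is itself Riesz distributed, in a form that matches the Peirce-decomposition machinery used throughout the theory of Riesz distributions. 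Your boundary conventions ($s_0=0$, $s_{r+1}=0$) and the count of $s_m-s_{m-1}$ columns with support starting at row $m$ are handled correctly, so the identification of the two Laplace transforms on $\frac{I_r}{2}-\Omega$ is complete.
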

\begin{proof}
We use the following Peirse decomposition of an $r\times r$ matrix
$\omega$. \\For $1\leq l \leq k$, we set
$$\omega^{(l)}=\left(\omega_{i,j}\right)_{1\leq i \leq p_{l} \\
1\leq j \leq p_{l}},$$ and we write $\omega^{(l+1)}$ as
\begin{equation}\label{3508}
\omega^{(l+1)}=\left(
  \begin{array}{cccccccccccccccccc}
    \omega^{(l)}& ^{t}\omega^{(l+1)}_{21}\\
     \omega^{(l+1)}_{21} & \omega^{(l+1)}_{0}
\end{array}
\right).
\end{equation}
 From Proposition \ref{3353}, we have that $U^{(1)} \
^{t}U^{(1)}$ has the
$W_{p_{1}}(\frac{s_{p_{1}}}{2},\frac{I_{p_{1}}}{2})$ distribution.
On the other hand, since  $s_{1}=...=s_{p_{1}}$, we have that
$$W_{p_{1}}\left(\frac{s_{p_{1}}}{2},\frac{I_{p_{1}}}{2}\right)=
R_{p_{1}}\left(\left(\frac{s_{1}}{2},...,\frac{s_{p_{1}}}{2}\right),\frac{I_{p_{1}}}{2}\right).$$
It suffises to show that if $U^{(l)}\ ^{t}U^{(l)}$ is
$R_{p_{l}}\left(\left(\frac{s_{1}}{2},...,\frac{s_{p_{l}}}{2}\right),\frac{I_{p_{l}}}{2}\right)$,
then $U^{(l+1)}\ ^{t}U^{(l+1)}$ is\\
$R_{p_{l+1}}\left(\left(\frac{s_{1}}{2},...,\frac{s_{p_{l+1}}}{2}\right),\frac{I_{p_{l+1}}}{2}\right)$.
\\Again we use the Laplace transform. For
$\theta\in(\frac{I_{r}}{2}-\Omega)$ and with the decomposition
(\ref{3508}), we have
\begin{eqnarray*}
L_{U^{(l+1)}\ ^{t}U^{(l+1)}}(\theta^{(l+1)})
&=&E\left(\exp\left(\langle\theta^{(l+1)},U^{(l+1)}\
^{t}U^{(l+1)}\rangle\right)\right)\\
&=&E\left(\exp\left(tr\left(\theta^{(l+1)}_{0}U^{(l+1)}_{0}\ ^{t}U^{(l+1)}_{0}\right)\right)\right)\\
&& E\left(\exp \left(tr\left(\theta^{(l)}U^{(l)}\ ^{t}U^{(l)}+ \
^{t}\theta^{(l+1)}_{21}U^{(l+1)}_{21}\
^{t}U^{(l)}\right.\right.\right.
\\ && +\left.\left.\left.U^{(l)}\
^{t}U^{(l+1)}_{21}\theta^{(l+1)}_{21}+\theta^{(l+1)}_{0}U^{(l+1)}_{21}\
^{t}U^{(l+1)}_{21} \right)\right)\right).
\end{eqnarray*}
We have that $U^{(l+1)}_{0}$ is a $(p_{(l+1)}-p_{(l)})\times
(s_{p_{(l+1)}}-s_{p_{(l)}})$ Gaussian matrix with distribution
$N(0,I_{(p_{(l+1)}-p_{(l)}})$. \\According to Proposition
\ref{3353}, the matrix $U^{(l+1)}_{0}\ ^{t}U^{(l+1)}_{0}$ has the
Wishart distribution
$W_{r}(\frac{s_{p_{(l+1)}}-s_{p_{(l)}}}{2},\frac{I_{(p_{(l+1)}-p_{(l)})}}{2})$.
Hence
\begin{eqnarray*}
E\left(\exp\left(tr\left(\theta^{(l+1)}_{0}U^{(l+1)}_{0}\
^{t}U^{(l+1)}_{0}\right)\right)\right)
=\frac{\Delta^{\frac{s_{p_{(l+1)}}-s_{p_{(l)}}}{2}}\left((\frac{I_{(p_{(l+1)}-p_{(l)})}}{2}-\theta^{(l+1)}_{0})^{-1}\right)}
{\Delta^{\frac{s_{p_{(l+1)}}-s_{p_{(l)}}}{2}}\left(2(I_{(p_{(l+1)}-p_{(l)})})\right)}.
\end{eqnarray*}
Concerning the second expectation, we have that
\begin{eqnarray*}
&&tr\left( \ ^{t}\theta^{(l+1)}_{21}U^{(l+1)}_{21}\ ^{t}U^{(l)}
+U^{(l)}\
^{t}U^{(l+1)}_{21}\theta^{(l+1)}_{21}+\theta^{(l+1)}_{0}U^{(l+1)}_{21}\
^{t}U^{(l+1)}_{21} \right)\\ &&= 2\langle U^{(l+1)}_{21} ,\
\theta^{(l+1)}_{21} U^{(l)}\rangle +\langle U^{(l+1)}_{21} ,\
\theta^{(l+1)}_{0}U^{(l+1)}_{21}\rangle
\end{eqnarray*}
As $U^{(l+1)}_{21}$ is a $(p_{(l+1)}-p_{(l)})\times s_{p_{l}}$
matrix with distribution $N(0,I_{(p_{(l+1)}-p_{(l)})})$, we obtain
that
\begin{eqnarray*}
&&E\left(\exp\left(2\langle U^{(l+1)}_{21} ,\ \theta^{(l+1)}_{21}
U^{(l)}\rangle +\langle U^{(l+1)}_{21} ,\
\theta^{(l+1)}_{0}U^{(l+1)}_{21}\rangle\right)|U^{(l)}\right)
\\&&=\Delta^{\frac{ s_{p_{(l)}}}{2}}
\left(\left(I_{(p_{(l+1)}-p_{(l)}}-2\theta^{(l+1)}_{0}\right)^{-1}\right)
\\&&\ \ \ \ \exp\left(2\langle \theta^{(l+1)}_{21}
U^{(l)},\left(I_{(p_{(l+1)}-p_{(l)}}-2\theta^{(l+1)}_{0}\right)^{-1}
\theta^{(l+1)}_{21} U^{(l)}\rangle\right).
\end{eqnarray*}
Now multiplying this by $\exp \left(tr\left(\theta^{(l)}U^{(l)}\
^{t}U^{(l)}\right)\right)$, we need then to calculate the
expectation
$$E\left(\exp\left(\left\langle
\left(\theta^{(l)}+^{t}\theta^{(l+1)}_{21}
\left(\frac{I_{(p_{(l+1)}-p_{(l)}}}{2}-\theta^{(l+1)}_{0}\right)^{-1}
\theta^{(l+1)}_{21} \right),U^{(l)}\
^{t}U^{(l)}\right\rangle\right)\right).$$ As from the hypothesis,
$U^{(l)}\ ^{t}U^{(l)}$ is
$R_{p_{l}}\left(\left(\frac{s_{1}}{2},...,\frac{s_{p_{l}}}{2}\right),\frac{I_{p_{l}}}{2}\right)$,
using (\ref{dd}) this is equal to
$$\frac{\Delta_{\left(\frac{s_{1}}{2},...,\frac{s_{p_{l}}}{2}\right)}
\left(\left(\frac{I_{p_{l}}}{2}-\left(\theta^{(l)}+^{t}\theta^{(l+1)}_{21}
\left(\frac{I_{(p_{(l+1)}-p_{(l)}}}{2}-\theta^{(l+1)}_{0}\right)^{-1}
\theta^{(l+1)}_{21} \right)\right)^{-1}\right)}{\Delta
_{\left(\frac{s_{1}}{2},...,\frac{s_{p_{l}}}{2}\right)}(2I_{p_{l}})}.
$$
Finally, we obtain
\begin{eqnarray*}
&&L_{U^{(l+1)}\ ^{t}U^{(l+1)}}(\theta^{(l+1)})
=\frac{\Delta^{\frac{s_{p_{(l+1)}}-s_{p_{(l)}}}{2}}\left(\left(\frac{I_{(p_{(l+1)}-p_{(l)}}}{2}-\theta^{(l+1)}_{0}\right)^{-1}\right)}
{\Delta^{\frac{s_{p_{(l+1)}}-s_{p_{(l)}}}{2}}\left(2(I_{(p_{(l+1)}-p_{(l)})})\right)}\\&&
\frac{\Delta^{\frac{ s_{p_{(l)}}}{2}}
\left(\left(\frac{I_{(p_{(l+1)}-p_{(l)}}}{2}-\theta^{(l+1)}_{0}\right)^{-1}\right)}
{\Delta^{\frac{s_{p_{(l)}}}{2}}\left(2(I_{(p_{(l+1)}-p_{(l)})})\right)}\\&&\frac{\Delta_{\left(\frac{s_{1}}{2},...,\frac{s_{p_{l}}}{2}\right)}
\left(\left(\frac{I_{p_{l}}}{2}-\theta^{(l)}\
-^{t}\theta^{(l+1)}_{21}
\left(\frac{I_{(p_{(l+1)}-p_{(l)}}}{2}-\theta^{(l+1)}_{0}\right)^{-1}
\theta^{(l+1)}_{21} \right)^{-1}\right)}{\Delta
_{\left(\frac{s_{1}}{2},...,\frac{s_{p_{l}}}{2}\right)}(2I_{p_{l}})},
\end{eqnarray*}
that is
\begin{eqnarray*}
&&L_{U^{(l+1)}\ ^{t}U^{(l+1)}}(\theta^{(l+1)}) =
\frac{\Delta^{\frac{ s_{p_{(l+1)}}}{2}}
\left(\left(\frac{I_{(p_{(l+1)}-p_{(l)}}}{2}-\theta^{(l+1)}_{0}\right)^{-1}\right)}
{\Delta^{\frac{s_{p_{(l+1)}}}{2}}\left(2(I_{(p_{(l+1)}-p_{(l)})})\right)}\\&&\frac{\Delta_{\left(\frac{s_{1}}{2},...,\frac{s_{p_{l}}}{2}\right)}
\left(\left(\frac{I_{p_{l}}}{2}-\theta^{(l)}\
-^{t}\theta^{(l+1)}_{21}
\left(\frac{I_{(p_{(l+1)}-p_{(l)}}}{2}-\theta^{(l+1)}_{0}\right)^{-1}
\theta^{(l+1)}_{21} \right)^{-1}\right)}{\Delta
_{\left(\frac{s_{1}}{2},...,\frac{s_{p_{l}}}{2}\right)}(2I_{p_{l}})}.
\end{eqnarray*}
But  $s_{p_{l}+1}=...=s_{p_{l+1}}$, then
$$\Delta^{\frac{s_{p_{(l+1)}}}{2}}\left(2(I_{(p_{(l+1)}-p_{(l)})})\right)=
\Delta_{\left(\frac{s_{p_{l}+1}}{2},...,\frac{s_{p_{l+1}}}{2}\right)}\left(2(I_{(p_{(l+1)}-p_{(l)})})\right),$$
and
$$\Delta^{\frac{
s_{p_{(l+1)}}}{2}}
\left(\left(\frac{I_{(p_{(l+1)}-p_{(l)}}}{2}-\theta^{(l+1)}_{0}\right)^{-1}\right)=
\Delta_{\left(\frac{s_{p_{l}+1}}{2},...,\frac{s_{p_{l+1}}}{2}\right)}
\left(\left(\frac{I_{(p_{(l+1)}-p_{(l)}}}{2}-\theta^{(l+1)}_{0}\right)^{-1}\right).$$
As we have
$$\Delta
_{\left(\frac{s_{1}}{2},...,\frac{s_{p_{l}}}{2}\right)}(2I_{p_{l}})
\Delta_{\left(\frac{s_{p_{l}+1}}{2},...,\frac{s_{p_{l+1}}}{2}\right)}\left(2(I_{(p_{(l+1)}-p_{(l)})})\right)=
\Delta_{\left(\frac{s_{1}}{2},...,\frac{s_{p_{l+1}}}{2}\right)}\left(2(I_{(p_{(l+1)})})\right),
$$
it follows that
$$L_{U^{(l+1)}\ ^{t}U^{(l+1)}}(\theta^{(l+1)})=
\frac{\Delta_{\left(\frac{s_{1}}{2},...,\frac{s_{p_{l+1}}}{2}\right)}
\left(\left(\frac{I_{p_{(l+1)}}}{2}-\theta^{(l+1)}\right)^{-1}\right)}
{\Delta_{\left(\frac{s_{1}}{2},...,\frac{s_{p_{l+1}}}{2}\right)}\left(2(I_{(p_{(l+1)})})\right)}.$$
Therefore $U^{(l+1)}\ ^{t}U^{(l+1)}$ is a Riesz random matrix with
distribution
$$R_{p_{l+1}}\left(\left(\frac{s_{1}}{2},...,\frac{s_{p_{l+1}}}{2}\right),\frac{I_{p_{l+1}}}{2}\right).$$
Given that $X=U^{(k+1)}\ ^{t}U^{(k+1)}$, the result follows.
\end{proof}\\

\end{document}